\title{On normalization of quasi-log canonical pairs}
\author{Osamu Fujino and Haidong Liu}
\date{2018/8/20, version 0.17}
\subjclass[2010]{Primary 14E30; Secondary 14C20}
\keywords{quasi-log canonical pairs, normalization, Du Bois singularities}
\address{Department of Mathematics, Graduate School of Science, 
Osaka University, Toyonaka, Osaka 560-0043, Japan}
\email{fujino@math.sci.osaka-u.ac.jp}
\address{Department of Mathematics, Graduate School of Science, 
Kyoto University, Kyoto 606-8502, Japan}
\email{liu.dong.82u@st.kyoto-u.ac.jp}
\DeclareMathOperator{\Nqklt}{Nqklt}
\newtheorem{thm}{Theorem}[section]
\newtheorem{prop}[thm]{Proposition}
\newtheorem{conj}[thm]{Conjecture}
\newtheorem{cor}[thm]{Corollary}
\newtheorem{claim}{Claim}
\theoremstyle{definition}
\newtheorem{ex}[thm]{Example}
\newtheorem{defn}[thm]{Definition}
\newtheorem{rem}[thm]{Remark}
\newtheorem*{ack}{Acknowledgments}       
\begin{document}

\maketitle 

\begin{abstract}
The normalization of an irreducible quasi-log canonical pair naturally becomes 
a quasi-log canonical pair. 
\end{abstract}

\tableofcontents

\section{Introduction}\label{f-sec1}
In \cite{ambro}, Florin Ambro introduced 
the notion of {\em{quasi-log varieties}}, which are now called {\em{quasi-log schemes}}, 
in order to establish the cone and contraction theorem for {\em{generalized 
log varieties}}. Note that a generalized 
log variety is a pair $(X, \Delta)$ consisting of a normal irreducible variety 
$X$ and an effective $\mathbb R$-divisor $\Delta$ on $X$ 
such that $K_X+\Delta$ is $\mathbb R$-Cartier. 
Although the main result of \cite{ambro} was recovered 
without using the theory of quasi-log schemes in 
\cite{fujino-fund}, it became clear that quasi-log schemes are 
ubiquitous in the theory of minimal models (see, for example, 
\cite{fujino-slc} and \cite{fujino-foundations}). 
As Ambro said in \cite{ambro}, 
the definition of quasi-log schemes is motivated by Kawamata's X-method. 
Therefore, it is not surprising that quasi-log schemes often appear naturally 
in the theory of minimal models. 
In this paper, we prove that the normalization of an irreducible 
{\em{quasi-log canonical pair}} ({\em{qlc pair}}, for short) 
becomes a quasi-log canonical pair. Note that the notion of 
quasi-log canonical pairs is one of the useful generalizations of log canonical pairs 
in the framework of quasi-log schemes. In general, a 
quasi-log canonical pair may be reducible and may not necessarily be equidimensional. 
We also note that the result of this paper plays a crucial role 
when we show that every quasi-log canonical pair has only 
Du Bois singularities in \cite{fujino-haidong}.

\medskip

Let $(X, \Delta)$ be a log canonical 
pair and let $W$ be a log canonical center of $(X, \Delta)$. 
Then $[W, \omega]$ has a natural qlc structure, where 
$\omega=(K_X+\Delta)|_W$. 
For the details, see Example \ref{z-ex2.11} below and 
\cite[6.4.1 and 6.4.2]{fujino-foundations}.  
Let $\nu: W^\nu\to W$ be the normalization. 
Then we expect that there exists an effective $\mathbb R$-divisor 
$\Delta_{W^\nu}$ on $W^\nu$ such that 
$(W^\nu, \Delta_{W^\nu})$ is log canonical 
and that $K_{W^\nu}+\Delta_{W^\nu}\sim 
_{\mathbb R} \nu^*\omega$. 
However, it is still a difficult open problem to find $\Delta_{W^\nu}$ 
with the above properties. 
For a related topic, see \cite{fujino-gongyo}. 
By Theorem \ref{f-thm1.1} below, which is the main theorem of this paper,  
we see that $[W^\nu, \nu^*\omega]$ naturally becomes a qlc pair. 
Therefore, we can apply 
the theory of quasi-log schemes to 
$[W^\nu, \nu^*\omega]$. 

\begin{thm}[Normalization of qlc pairs]\label{f-thm1.1}
Let $[X, \omega]$ be a qlc pair such that 
$X$ is irreducible. 
Let $\nu:Z\to X$ be the normalization. 
Then $[Z, \nu^*\omega]$ naturally becomes a qlc pair with 
the following properties: 
\begin{itemize}
\item[(i)] if $C$ is a qlc center of $[Z, \nu^*\omega]$, 
then $\nu(C)$ is a qlc center of $[X, \omega]$, 
and 
\item[(ii)] $\Nqklt(Z, \nu^*\omega)=\nu^{-1}(\Nqklt(X, \omega))$. 
More precisely, the equality $$\nu_*\mathcal I_{\Nqklt(Z, \nu^*\omega)}=
\mathcal I_{\Nqklt(X, \omega)}$$ holds, where $
\mathcal I_{\Nqklt(X, \omega)}$ 
and $\mathcal I_{\Nqklt(Z, \nu^*\omega)}$ are the defining ideal sheaves of 
$\Nqklt(X, \omega)$ and $\Nqklt(Z, \nu^*\omega)$ respectively. 
\end{itemize}  
\end{thm}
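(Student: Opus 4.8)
The plan is to descend the qlc structure of $[X,\omega]$ to $Z$ along a single commutative diagram and then use the finiteness of $\nu$ to verify the defining conditions. Fix a quasi-log canonical structure $f\colon (Y,B_Y)\to X$, where $(Y,B_Y)$ is a globally embedded simple normal crossing pair with $B_Y$ a subboundary, $f^*\omega\sim_{\mathbb R}K_Y+B_Y$, and $f_*\mathcal O_Y(\lceil -(B_Y^{<1})\rceil)=\mathcal O_X$. Since $\lceil -(B_Y^{<1})\rceil$ is effective, sandwiching $\mathcal O_Y\hookrightarrow \mathcal O_Y(\lceil -(B_Y^{<1})\rceil)$ between $\mathcal O_X$ and $f_*\mathcal O_Y(\lceil -(B_Y^{<1})\rceil)=\mathcal O_X$ shows that $f_*\mathcal O_Y=\mathcal O_X$; in particular $f$ is surjective. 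The goal is then to produce a globally embedded simple normal crossing pair $(Y',B_{Y'})$ together with proper morphisms $g\colon Y'\to Z$ and $h\colon Y'\to Y$ such that $\nu\circ g=f\circ h$, such that $K_{Y'}+B_{Y'}=h^*(K_Y+B_Y)$, and such that $h$ preserves the round-up push-forward sheaves, i.e.\ $h_*\mathcal O_{Y'}(\lceil -(B_{Y'}^{<1})\rceil)=\mathcal O_Y(\lceil -(B_Y^{<1})\rceil)$ together with its twisted-down analogue $h_*\mathcal O_{Y'}(\lceil -(B_{Y'}^{<1})\rceil-B_{Y'}^{=1})=\mathcal O_Y(\lceil -(B_Y^{<1})\rceil-B_Y^{=1})$.

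Granting such a datum, I would check that $g\colon (Y',B_{Y'})\to Z$ is a qlc structure for $[Z,\nu^*\omega]$. First, $g^*\nu^*\omega=h^*f^*\omega\sim_{\mathbb R}h^*(K_Y+B_Y)=K_{Y'}+B_{Y'}$, so the required $\mathbb R$-linear equivalence holds. For the remaining condition I would exploit that $\nu$ is finite, hence affine, so that $\nu_*$ is exact and faithful on quasi-coherent sheaves. Writing $A'=\lceil -(B_{Y'}^{<1})\rceil$, the structure map gives an injection $\mathcal O_Z\hookrightarrow g_*\mathcal O_{Y'}(A')$ (here $g$ is surjective because $\nu g=fh$ is), while
\[
\nu_*g_*\mathcal O_{Y'}(A')=f_*h_*\mathcal O_{Y'}(A')=f_*\mathcal O_Y(\lceil -(B_Y^{<1})\rceil)=\mathcal O_X=\nu_*\mathcal O_Z .
\]
Applying $\nu_*$ to the cokernel of $\mathcal O_Z\hookrightarrow g_*\mathcal O_{Y'}(A')$ and using exactness and faithfulness forces that cokernel to vanish, so $g_*\mathcal O_{Y'}(A')=\mathcal O_Z$ and $[Z,\nu^*\omega]$ is a qlc pair. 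The same mechanism yields (ii): the defining ideal of $\Nqklt$ is the round-up push-forward twisted down by the reduced part, whence
\[
\nu_*\mathcal I_{\Nqklt(Z,\nu^*\omega)}=f_*h_*\mathcal O_{Y'}(A'-B_{Y'}^{=1})=f_*\mathcal O_Y(\lceil -(B_Y^{<1})\rceil-B_Y^{=1})=\mathcal I_{\Nqklt(X,\omega)},
\]
which is the asserted equality. Property (i) would follow from the construction at the level of strata: $h$ carries each stratum of $(Y',B_{Y'})$ into a stratum of $(Y,B_Y)$, so if $C=g(S')$ is a qlc center of $[Z,\nu^*\omega]$ then $\nu(C)=f(h(S'))$ is the image of a stratum of $(Y,B_Y)$, hence a qlc center of $[X,\omega]$.

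It remains to construct the diagram, and this is where I expect the real difficulty to lie. The natural attempt is to take $Y'$ to be a simple normal crossing model (a log resolution) of the union of the main components of the fibre product $Y\times_X Z$, with $h\colon Y'\to Y$ and $g\colon Y'\to Z$ the induced projections and $B_{Y'}$ defined by the crepant formula $K_{Y'}+B_{Y'}=h^*(K_Y+B_Y)$. Over the normal locus of $X$ the morphism $\nu$ is an isomorphism, so there $h$ is birational and the compatibility of the round-up push-forwards is the standard invariance of these sheaves under crepant birational morphisms of simple normal crossing pairs. The essential obstacle is the behaviour over the non-normal locus of $X$: there $Y\times_X Z\to Y$ acquires several sheets, so $h$ is only generically finite and possibly ramified, and one must verify that, after transporting $B_Y$ by the ramification/crepant formula, $B_{Y'}$ remains a subboundary and that both round-up push-forward sheaves are still preserved by $h$. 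Concretely this amounts to understanding how the conductor of $\nu$ sits inside the qlc structure: I expect it to behave as in the semi-log-canonical theory, where the non-normal locus is a union of log canonical centers and normalization converts the double locus into a coefficient-one (i.e.\ $\Nqklt$) part of the boundary on $Z$. Making this bookkeeping precise over the non-normal locus---so that $h_*\mathcal O_{Y'}(\lceil -(B_{Y'}^{<1})\rceil)=\mathcal O_Y(\lceil -(B_Y^{<1})\rceil)$ and its $\Nqklt$-analogue hold---is the heart of the argument; everything else is a formal consequence of the finiteness of $\nu$.
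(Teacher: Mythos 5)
Your reduction is internally inconsistent in exactly the case the theorem is about, namely when $X$ is not normal. You require $h$ to satisfy $h_*\mathcal O_{Y'}(\lceil -(B_{Y'}^{<1})\rceil)=\mathcal O_Y(\lceil -(B_Y^{<1})\rceil)$ and simultaneously want to conclude $g_*\mathcal O_{Y'}(\lceil -(B_{Y'}^{<1})\rceil)=\mathcal O_Z$. Pushing the second identity forward by $\nu$ and using the first gives $\nu_*\mathcal O_Z=f_*h_*\mathcal O_{Y'}(\lceil -(B_{Y'}^{<1})\rceil)=f_*\mathcal O_Y(\lceil -(B_Y^{<1})\rceil)=\mathcal O_X$, which holds only if $X$ is already normal; the equality ``$\mathcal O_X=\nu_*\mathcal O_Z$'' in your display is false otherwise. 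So no datum $(Y',B_{Y'},h,g)$ with all the properties you postulate can exist, and the difficulty you defer to the last paragraph (``bookkeeping over the non-normal locus'') is not bookkeeping but an impossibility. A second, independent problem is the identification $\mathcal I_{\Nqklt(X,\omega)}=f_*\mathcal O_Y(\lceil -(B_Y^{<1})\rceil-B_Y^{=1})$: qlc centers also arise from the strata of the simple normal crossing divisor $Y$ itself (its double locus) and from irreducible components of $Y$ mapping into proper closed subsets of $X$, none of which are detected by $B_Y^{=1}$.

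The paper's proof goes the opposite way: instead of covering all of $Y$, it deletes the part lying over $\Nqklt(X,\omega)$. After arranging, by \cite[Proposition 6.3.1]{fujino-foundations}, that the union $Y''$ of the strata mapping into $\Nqklt(X,\omega)$ is a union of irreducible components, one puts $Y'=Y-Y''$ and takes the Stein factorization $Y'\to V\to X$ of $f|_{Y'}$; a graph argument shows that $V$ is normal, hence $V=Z$, and the extra functions in $\nu_*\mathcal O_Z$ beyond $\mathcal O_X$ appear precisely because $\mathcal O_{Y'}(\lceil -(B_{Y'}^{<1})\rceil)$ is a quotient of $\mathcal O_Y(\lceil -(B_Y^{<1})\rceil)$ rather than a subsheaf, which dissolves the contradiction above. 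The isomorphism $\mathcal O_Z\simeq f'_*\mathcal O_{Y'}(\lceil -(B_{Y'}^{<1})\rceil)$ is then checked in codimension one on the normal variety $Z$, using that some component of $B_{Y'}^{=1}$ dominates each prime divisor of $Z$ lying over $\Nqklt(X,\omega)$ and that the fibers of $f$ are connected; finally $\mathcal I_{\Nqklt(Z,\nu^*\omega)}=f'_*\mathcal O_{Y'}(\lceil -(B_{Y'}^{<1})\rceil-Y''|_{Y'})$ yields (ii). If you drop the requirement that $h$ cover the components of $Y$ over $\Nqklt(X,\omega)$, you are essentially forced back to this construction.
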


For the definition of qlc pairs and $\Nqklt(X, \omega)$, see 
Definitions \ref{z-def2.4} and \ref{z-def2.7}, respectively. 
By the theory of quasi-log schemes discussed 
in \cite[Chapter 6]{fujino-foundations} and Theorem \ref{f-thm1.1}, 
the fundamental theorems of the minimal model program hold 
for $[Z, \nu^*\omega]$. More precisely, 
the cone and contraction theorem and the basepoint-free 
theorem of Reid--Fukuda type hold for $[Z, \nu^*\omega]$ by 
\cite[Theorem 6.4.7]{fujino-foundations} and 
\cite[Theorem 6.9.1]{fujino-foundations} respectively 
(see also \cite{fujino-reid-fukuda}). 
We can also apply various vanishing theorems to $[Z, \nu^*\omega]$. 
As a special case, we have the following vanishing theorem. 

\begin{cor}[Vanishing theorem for 
normalizations]\label{f-cor1.2}
We use the same notation as in Theorem \ref{f-thm1.1}. 
Let $\pi:X\to S$ be a proper morphism onto a scheme $S$ and  
let $L$ be a Cartier divisor on $X$ such that $L-\omega$ is 
nef and log big over $S$ with respect to $[X, \omega]$. 
Then 
$$
R^i(\pi\circ \nu)_*\mathcal O_Z(\nu^*L)=0
$$ 
for every $i>0$. 
\end{cor}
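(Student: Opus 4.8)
The plan is to transport the vanishing theorem for quasi-log schemes along the finite morphism $\nu$. By Theorem \ref{f-thm1.1}, the pair $[Z, \nu^*\omega]$ is a qlc pair, and since $\nu$ is finite the composite $\pi\circ\nu\colon Z\to S$ is proper and $\nu^*L$ is a Cartier divisor on $Z$. Thus it suffices to verify that
$$
\nu^*L-\nu^*\omega=\nu^*(L-\omega)
$$
is nef and log big over $S$ with respect to $[Z, \nu^*\omega]$; the desired vanishing $R^i(\pi\circ\nu)_*\mathcal O_Z(\nu^*L)=0$ for $i>0$ then follows by applying the vanishing theorem for qlc pairs (see \cite[Chapter 6]{fujino-foundations}) to $[Z, \nu^*\omega]$.

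Nefness is immediate, since the pullback by the proper morphism $\nu$ of a divisor that is nef over $S$ is again nef over $S$. For the log bigness I would argue stratum by stratum using property (i) of Theorem \ref{f-thm1.1}. Recall that a qlc stratum of $[Z, \nu^*\omega]$ is either the whole (irreducible) scheme $Z$ or a qlc center $C$, and in either case its image under $\nu$ is a qlc stratum of $[X, \omega]$: for $Z$ this image is $\nu(Z)=X$, and for a qlc center $C$ it is the qlc center $\nu(C)$ supplied by property (i). Since $L-\omega$ is log big over $S$ with respect to $[X, \omega]$, the restriction of $L-\omega$ to each such image is big over $S$.

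The one point requiring a small argument is the descent of bigness along the induced maps. Because $\nu$ is finite and surjective, the restriction $\nu|_C\colon C\to\nu(C)$ is finite and surjective for every qlc stratum $C$, so that $\nu^*(L-\omega)|_C=(\nu|_C)^*\bigl((L-\omega)|_{\nu(C)}\bigr)$ is the pullback of a big divisor by a finite surjective morphism, and is therefore big over $S$. Running this over all qlc strata $C$ shows that $\nu^*(L-\omega)$ is log big over $S$, which completes the verification of the hypotheses of the vanishing theorem and hence the proof.

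I expect the only genuinely nontrivial step to be this last one, the preservation of bigness under pullback by a finite surjective morphism; everything else is a formal consequence of Theorem \ref{f-thm1.1} and the definitions. It is precisely here that property (i)---rather than merely the assertion that $[Z,\nu^*\omega]$ is a qlc pair---is needed, because log bigness is a condition imposed on every qlc center, so one must match the centers of $[Z,\nu^*\omega]$ with those of $[X,\omega]$.
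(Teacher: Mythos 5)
Your proposal is correct and follows the same route as the paper: the paper's proof is the one-line observation that the statement follows from Theorem \ref{f-thm1.1} together with the vanishing theorem for qlc pairs (Theorem \ref{z-thm2.8}), and your argument simply spells out the verification of the hypotheses (nefness under pullback, and log bigness via property (i) plus preservation of relative bigness under finite surjective morphisms) that the paper leaves implicit.
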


Let us discuss some conjectures for qlc pairs. 
The second author poses the following conjecture on 
Du Bois singularities. 

\begin{conj}\label{f-conj1.3} Let $[X, \omega]$ be a qlc pair. 
Then $X$ has only Du Bois singularities. 
\end{conj}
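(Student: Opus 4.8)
I plan to prove the stronger assertion by induction on $\dim X$, combining the inductive structure of qlc pairs with a descent criterion for Du Bois singularities. Recall that $X$ has Du Bois singularities precisely when the natural map $\mathcal O_X\to \underline{\Omega}^0_X$ into the $0$-th graded piece of the Deligne--Du Bois complex is a quasi-isomorphism; the case $\dim X=0$ is trivial. For the inductive step I would single out $N:=\Nqklt(X,\omega)$ with its defining ideal $\mathcal I_N$. The first point is that $[N,\omega|_N]$ carries a natural qlc structure by the adjunction theory for qlc pairs (see \cite[Chapter 6]{fujino-foundations}), and that $\dim N<\dim X$: since $[X,\omega]$ is generically quasi-klt along every top-dimensional component, $N$ contains no generic point of such a component, while the remaining components of $N$ are automatically of smaller dimension. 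Hence the induction hypothesis applies and $N$ is Du Bois.

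The engine of the argument is a comparison of two exact triangles. On one hand there is the short exact sequence $0\to \mathcal I_N\to \mathcal O_X\to \mathcal O_N\to 0$; on the other, functoriality of the Du Bois complex for the closed embedding $N\hookrightarrow X$ yields a distinguished triangle
$$
\underline{\Omega}^0_{X,N}\longrightarrow \underline{\Omega}^0_X\longrightarrow \underline{\Omega}^0_N\xrightarrow{\ +1\ },
$$
where $\underline{\Omega}^0_{X,N}$ is the Du Bois complex of the pair $(X,N)$. These fit into a morphism of triangles, and since $N$ is already known to be Du Bois, the triangulated five lemma shows that $\mathcal O_X\to \underline{\Omega}^0_X$ is a quasi-isomorphism if and only if $\mathcal I_N\to \underline{\Omega}^0_{X,N}$ is, that is, if and only if $(X,N)$ is a \emph{Du Bois pair}. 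The problem is thereby reduced to this relative statement.

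To establish the Du Bois pair property I would compute $\underline{\Omega}^0_{X,N}$ from the quasi-log structure. Writing $f\colon (Y,B_Y)\to X$ for the structure morphism with $S:=\lfloor B_Y\rfloor$ and $f(S)=N$, the pair $(Y,S)$ is a simple normal crossing pair, hence Du Bois, so its relative Du Bois complex is $\mathcal O_Y(-S)$; the content is to descend this along $f$ and identify the relevant derived pushforward with $\mathcal I_N=f_*\mathcal O_Y(\lceil -B_Y^{<1}\rceil-S)$. This is exactly where a vanishing theorem of Ambro--Fujino type enters, in the guise of Corollary \ref{f-cor1.2}: it forces the higher direct images to vanish (equivalently, it guarantees the requisite torsion-freeness), so the derived pushforward collapses onto the stated ideal sheaf. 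Here Theorem \ref{f-thm1.1} is indispensable: for reducible or non-normal $X$ one first passes to the normalization $\nu\colon Z\to X$, which is again qlc, and the compatibility $\Nqklt(Z,\nu^*\omega)=\nu^{-1}(\Nqklt(X,\omega))$ from Theorem \ref{f-thm1.1}(ii) ensures that the relative Du Bois complexes and the vanishing match across $\nu$, so that the computation on the well-behaved space $Z$ transports back to $X$ through the conductor/gluing formalism.

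The hard part will be this last step. The quasi-log morphism $f$ is in general neither birational nor an isomorphism over $X\setminus N$, so reconciling it with an honest log resolution that computes $\underline{\Omega}^0_{X,N}$, and then extracting the quasi-isomorphism $\mathcal I_N\simeq \underline{\Omega}^0_{X,N}$ from the vanishing while correctly accounting for the fractional boundary $B_Y^{<1}$ through the round-up $\lceil -B_Y^{<1}\rceil$, is the genuine technical obstacle. The reduction to the normal case supplied by Theorem \ref{f-thm1.1} is precisely what makes this reconciliation tractable, which is why the normalization result is the crucial input for Conjecture \ref{f-conj1.3}.
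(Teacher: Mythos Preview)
The statement you are trying to prove is \emph{Conjecture}~\ref{f-conj1.3}; in this paper it is not proved at all. What the paper actually establishes is Proposition~\ref{f-prop1.4}, a reduction: assuming the conjecture for \emph{normal} qlc pairs, one deduces it in general by peeling off irreducible components via the seminormal Mayer--Vietoris sequence and then, for an irreducible $X$, passing to the normalization $\nu\colon Z\to X$ and invoking Theorem~\ref{f-thm1.1}(ii) together with Koll\'ar's descent criterion \cite[Corollary~6.28]{kollar}. The normal case itself is deferred to \cite{fujino-haidong}, where the key new input comes from \cite{fujino-slc-trivial} and ultimately from the theory of variations of mixed Hodge structure in \cite{fujino-fujisawa}. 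Your outline through the five-lemma reduction to the Du~Bois pair property for $(X,N)$ with $N=\Nqklt(X,\omega)$ is a reasonable reformulation and matches the spirit of the reduction in Proposition~\ref{f-prop1.4}, but it does not go beyond that reduction.

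The genuine gap is your proposed computation of $\underline{\Omega}^0_{X,N}$ via the quasi-log morphism $f\colon (Y,B_Y)\to X$. The Du~Bois complex is defined through cubical hyperresolutions, and there is no mechanism that identifies $\underline{\Omega}^0_{X,N}$ with $Rf_*\mathcal O_Y(-S)$ (or any twist thereof) when $f$ is not even generically finite; in a qlc structure $\dim Y$ may well exceed $\dim X$. Passing to the normalization via Theorem~\ref{f-thm1.1} does not help here: the induced morphism $f'\colon Y'\to Z$ is still not birational in general, so the same obstruction persists. Moreover, Corollary~\ref{f-cor1.2} is a vanishing theorem for Cartier divisors $L$ with $L-\omega$ nef and log big over a base; it says nothing about $R^if_*\mathcal O_Y(\lceil -B_Y^{<1}\rceil-S)$ and cannot be invoked to collapse the derived pushforward as you suggest. (Incidentally, $\lfloor B_Y\rfloor$ is the wrong divisor since $B_Y$ is only a subboundary; and the qlc centers also arise from the double locus of $Y$, so $f(B_Y^{=1})$ need not equal $N$.) In short, your outline correctly isolates the normal case as the crux, but the substantive content---why a \emph{normal} qlc pair is Du~Bois---requires the Hodge-theoretic machinery of \cite{fujino-slc-trivial} and is not accessible by the vanishing theorems available in this paper.
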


The statement of Conjecture 
\ref{f-conj1.3} is a complete 
generalization of \cite[Corollary 6.32]{kollar}. 
For the details of Du Bois singularities, 
see \cite[Section 5.3]{fujino-foundations} and 
\cite[Chapter 6]{kollar}.  
By Theorem \ref{f-thm1.1}, 
we have: 

\begin{prop}\label{f-prop1.4}
It is sufficient to prove Conjecture \ref{f-conj1.3} 
under the extra assumption that 
$X$ is normal. 
\end{prop}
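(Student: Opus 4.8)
The plan is to prove Conjecture \ref{f-conj1.3} for all qlc pairs by induction on $\dim X$, taking as given the case in which $X$ is normal and reducing every other case to it. The case $\dim X=0$ is clear. For the inductive step I would first collect the structural inputs supplied by the theory of quasi-log schemes in \cite{fujino-foundations}: the non-qklt locus $\Nqklt(X,\omega)$, every union of irreducible components of $X$, and the intersections of such unions all carry natural quasi-log canonical structures; the qklt locus of $[X,\omega]$ is a dense open subset meeting every irreducible component, so that $\dim \Nqklt(X,\omega)<\dim X$ and, more generally, each intersection stratum above has dimension strictly smaller than $\dim X$; and $X$ is seminormal, so those intersection strata are reduced. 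By the induction hypothesis all of these auxiliary schemes are Du Bois.

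I would then carry out two successive reductions. First, reduce to the irreducible case: writing $X=X_1\cup X_2$ as a union of one component and of the remaining components, the Mayer--Vietoris sequence $0\to \mathcal O_X\to \mathcal O_{X_1}\oplus \mathcal O_{X_2}\to \mathcal O_{X_1\cap X_2}\to 0$ together with the corresponding triangle of Du Bois complexes propagates the Du Bois property from $X_1$ and $X_2$ (fewer components) and $X_1\cap X_2$ (strictly smaller dimension) to $X$, exactly as in the gluing arguments of \cite[Chapter 6]{kollar}. So I may assume $X$ is irreducible. Now let $\nu\colon Z\to X$ be the normalization, which is finite, and set $W=\Nqklt(X,\omega)$ and $W_Z=\Nqklt(Z,\nu^*\omega)$. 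By Theorem \ref{f-thm1.1}, $[Z,\nu^*\omega]$ is a qlc pair with $Z$ normal, hence $Z$ is Du Bois by the normal case; and $W$, $W_Z$ are qlc pairs of dimension $<\dim X$, hence Du Bois by induction. Since $[X,\omega]$ is normal along its qklt locus, $\nu$ is an isomorphism over $X\setminus W$ and $W_Z=\nu^{-1}(W)$.

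The heart of the matter is to descend the Du Bois property from $Z$ to $X$ across this finite conductor square, and here Theorem \ref{f-thm1.1}(ii) enters decisively. The identity $\nu_*\mathcal I_{\Nqklt(Z,\nu^*\omega)}=\mathcal I_{\Nqklt(X,\omega)}$ of ideal sheaves, together with the finiteness of $\nu$ (so that $R^i\nu_*=0$ for $i>0$) and the inclusion $\mathcal O_X\hookrightarrow \nu_*\mathcal O_Z$, shows that the conductor sequence $$0\to \mathcal O_X\to \nu_*\mathcal O_Z\oplus \mathcal O_W\to \nu_*\mathcal O_{W_Z}\to 0$$ is exact. Feeding this exact sequence into the triangle of Du Bois complexes attached to the square, and using that $Z$, $W$ and $W_Z$ are Du Bois, one identifies $\mathcal O_X$ with $\underline{\Omega}^0_X$, that is, $X$ is Du Bois; this is the same gluing mechanism by which log canonical singularities are shown to be Du Bois in \cite[Corollary 6.32]{kollar}. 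I expect the main obstacle to be precisely this descent step: verifying that the conductor sequence is exact in the form dictated by Theorem \ref{f-thm1.1}(ii), and matching it with the hypotheses of the Du Bois descent theorem for finite covers, including the compatibility of the scheme structures carried by $W$ and $W_Z$. The remaining ingredients---that $\Nqklt(X,\omega)$ and the intersection strata are qlc of smaller dimension and that $\nu$ is an isomorphism over the qklt locus---are structural facts about quasi-log canonical pairs that I would extract from \cite{fujino-foundations}.
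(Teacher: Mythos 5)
Your proposal is correct and follows essentially the same route as the paper: the same double induction on dimension and number of irreducible components, the same Mayer--Vietoris reduction to the irreducible case, and the same use of Theorem \ref{f-thm1.1}(ii) plus finiteness of $\nu$ to get $R\nu_*\mathcal I_{\Nqklt(Z,\nu^*\omega)}=\mathcal I_{\Nqklt(X,\omega)}$ and conclude via Koll\'ar's Du Bois descent criterion (the paper cites \cite[Corollary 6.28]{kollar} directly rather than writing out the conductor sequence, but this is the same gluing mechanism).
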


Finally, we pose the following conjecture on normal qlc pairs. 

\begin{conj}\label{f-conj1.5}
Let $[X, \omega]$ be a qlc pair such that $X$ is quasi-projective and 
normal. 
Then there exists an effective $\mathbb Q$-divisor $\Delta$ on 
$X$ such that $(X, \Delta)$ is log canonical. 
\end{conj}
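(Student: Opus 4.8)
The plan is to exhibit the quasi-log structure as an lc-trivial fibration over $X$ and to descend it by the canonical bundle formula. First I would reduce to the case that $X$ is irreducible: as $X$ is normal it is a disjoint union of normal irreducible quasi-projective varieties, each inheriting a qlc structure by restriction, so it suffices to produce $\Delta$ on each piece. Fix then an irreducible normal $X$ together with a quasi-log resolution $f\colon (Y, B_Y)\to X$, where $(Y, B_Y)$ is a globally embedded simple normal crossing pair with $K_Y+B_Y\sim_{\mathbb R} f^*\omega$ and all coefficients of $B_Y$ at most one. Passing to the Stein factorization and using the qlc isomorphism $\mathcal O_X\xrightarrow{\ \sim\ } f_*\mathcal O_Y(\lceil -(B_Y^{<1})\rceil)$ together with the normality of $X$, I may assume that $f$ is a contraction, so that $K_Y+B_Y\sim_{\mathbb R,\,X}0$. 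If $f$ happens to be generically finite, then there is no moduli part and one finishes immediately by pushing $B_Y$ forward and comparing discrepancies; thus the real content lies in the case where $f$ has positive-dimensional fibres.

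In that case I would invoke the canonical bundle formula for lc-trivial fibrations (Kawamata, Ambro, Fujino--Gongyo). Applied to the sub-log-canonical pair $(Y, B_Y)$, it yields a discriminant $\mathbb R$-divisor $B_X$ and a nef moduli $\mathbb R$-divisor class $M_X$ on $X$ with $K_Y+B_Y\sim_{\mathbb R} f^*(K_X+B_X+M_X)$, and hence $\omega\sim_{\mathbb R} K_X+B_X+M_X$ since $f$ is a contraction. Because $(Y, B_Y)$ is log canonical, the discriminant $B_X$ has coefficients at most one and $(X, B_X+M_X)$ is a generalized log canonical pair in the sense of Birkar--Zhang. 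Two gaps separate this from the desired statement: the discriminant $B_X$ need not be effective, and $M_X$ is only a nef class rather than an honest effective divisor.

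The first gap is minor and is where quasi-projectivity enters. Since $X$ is quasi-projective I may add a small multiple of a general effective ample $\mathbb Q$-divisor; for a sufficiently small rational coefficient this cancels the bounded negative part of $B_X$ and preserves the generalized log canonical condition. Changing $\omega$ in the process is harmless, because the statement asks only for the existence of \emph{some} log canonical boundary on $X$ and imposes no compatibility with $\omega$. The second gap is the crux. To replace the generalized pair $(X, B_X+M_X)$ by an honest log canonical pair $(X,\Delta)$, one must represent the nef moduli class $M_X$ by a general effective member of a linear system and check that adjoining it keeps the pair log canonical; this is permissible exactly when $M_X$ is semiample. The whole argument therefore reduces to the \emph{b-semiampleness of the moduli part} in the canonical bundle formula, the conjecture of Prokhorov--Shokurov and Ambro, which is presently known only in low relative dimension and for restricted fibre types. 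I expect this semiampleness to be the decisive and still-open difficulty. Granting it, b-semiampleness also guarantees that $M_X$ may be taken to be a $\mathbb Q$-divisor, so that, choosing $M_X\sim_{\mathbb Q} M'\ge 0$ general and setting $\Delta$ equal to $B_X$ plus the ample correction plus $M'$, one obtains an effective $\mathbb Q$-divisor with $(X,\Delta)$ log canonical, as required.
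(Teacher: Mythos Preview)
The paper does not prove this statement: it is posed as Conjecture~1.5 and left open. The only discussion the paper offers is the remark that \cite{fujino-slc-trivial} establishes, for a normal irreducible qlc pair, a \emph{generalized} lc structure in the sense of Birkar--Zhang, calling this ``a weak solution of Conjecture~1.5.'' Your outline is exactly this expected route---descend via a canonical bundle formula to obtain a discriminant plus a nef moduli part, then try to convert the generalized pair into an honest log canonical pair---and you correctly isolate the b-semiampleness of the moduli part (Prokhorov--Shokurov) as the missing ingredient. So there is no discrepancy in strategy; rather, there is no proof in the paper to compare against, and your argument is explicitly conditional on an open conjecture, as it must be.

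One technical point deserves flagging. You invoke ``the canonical bundle formula for lc-trivial fibrations (Kawamata, Ambro, Fujino--Gongyo)'' for the map $f\colon (Y,B_Y)\to X$, but in the qlc setting $Y$ is a globally embedded simple normal crossing variety, not a normal variety, and even after discarding the components not dominating $X$ it need not become irreducible. The classical lc-trivial fibration machinery assumes a normal total space; the correct framework here is that of \emph{basic slc-trivial fibrations}, which is precisely what \cite{fujino-slc-trivial} sets up (using variations of mixed Hodge structure, cf.\ \cite{fujino-fujisawa}) in order to obtain the generalized lc structure on $X$. Your sketch implicitly assumes this extension goes through, and it does, but it is not a triviality and is not covered by the references you cite. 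With that caveat, your reduction to b-semiampleness is the right diagnosis of why Conjecture~1.5 remains open.
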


We note that \cite[Theorem 1.1]{fujino-some} strongly supports 
Conjecture \ref{f-conj1.5}. 
Of course, Conjecture \ref{f-conj1.3} follows from Conjecture 
\ref{f-conj1.5} by Proposition \ref{f-prop1.4}. 
This is because log canonical singularities are known to be Du Bois 
(see \cite{kollar}). 

\medskip 

Although Theorem \ref{f-thm1.1} 
may look somewhat artificial, 
it plays an important role 
in \cite{fujino-wenfei1}, \cite{fujino-wenfei2}, 
and \cite{fujino-haidong}. 
Roughly speaking, in \cite{fujino-slc-trivial}, 
we prove that $X$ is {\em{generalized lc}} in the sense of 
Birkar--Zhang (see \cite{birkar-zhang}) 
with some good properties when $[X, \omega]$ is a 
normal irreducible quasi-log canonical pair. 
We can see it as a weak solution of Conjecture \ref{f-conj1.5}. 
We note that \cite{fujino-slc-trivial} 
heavily depends on the theory of 
variations of mixed Hodge structure on cohomology 
with compact support (see \cite{fujino-fujisawa}). 
Then, in \cite{fujino-haidong}, 
we completely confirm Conjecture \ref{f-conj1.3}, 
that is, we show that 
$X$ has only Du Bois singularities if $[X, \omega]$ is a 
quasi-log canonical pair. 

\medskip 

We will work over $\mathbb C$, the complex number 
field, throughout this paper. 
A {\em{scheme}} means a separated scheme of finite type 
over $\mathbb C$. 
A {\em{variety}} means a reduced scheme, that is, 
a reduced separated scheme of finite type over $\mathbb C$. 
We will freely use the basic notation of the minimal model 
program as in \cite{fujino-fund}, \cite{fujino-slc}, and 
\cite{fujino-foundations}. 
For the details of the theory of quasi-log schemes, 
we recommend the reader to see \cite[Chapter 6]{fujino-foundations}. 

\begin{ack}
The first author was partially 
supported by JSPS KAKENHI Grant Numbers JP16H03925, JP16H06337. 
The authors would like to thank Professor Wenfei Liu, whose 
question is one of the motivations of this paper. 
\end{ack}

\section{Quick review of the theory of quasi-log schemes}

In this section, we quickly review the theory of quasi-log schemes 
because it is not so popular yet. 

\medskip 

Before we explain the definition of quasi-log canonical pairs, 
we prepare some basic definitions. 

\begin{defn}[$\mathbb R$-divisors]\label{z-def2.1}
Let $X$ be an equidimensional variety, 
which is not necessarily regular in codimension one. 
Let $D$ be an $\mathbb R$-divisor, 
that is, $D$ is a finite formal sum $\sum _i d_i D_i$, where 
$D_i$ is an irreducible reduced closed subscheme of $X$ of 
pure codimension one and $d_i$ is a real number for every $i$ 
such that $D_i\ne D_j$ for $i\ne j$. 
We put 
\begin{equation*}
D^{<1} =\sum _{d_i<1}d_iD_i, \quad 
D^{\leq 1}=\sum _{d_i\leq 1} d_i D_i, \quad 
\text{and} \quad
\lceil D\rceil =\sum _i \lceil d_i \rceil D_i, 
\end{equation*}
where $\lceil d_i\rceil$ is the integer defined by $d_i\leq 
\lceil d_i\rceil <d_i+1$. 

Let $B_1$ and $B_2$ be $\mathbb R$-Cartier divisors on $X$. 
Then $B_1\sim _{\mathbb R} B_2$ means that 
$B_1$ is $\mathbb R$-linearly equivalent 
to $B_2$. 
\end{defn}

We note that we can define {\em{$\mathbb Q$-divisors}} 
and $\sim_{\mathbb Q}$ similarly. 

\medskip 

The notion of {\em{globally embedded simple normal crossing 
pairs}} play a crucial role in the theory of quasi-log schemes 
described in \cite[Chapter 6]{fujino-foundations}. 

\begin{defn}[Globally embedded simple normal 
crossing pairs]\label{z-def2.2} 
Let $Y$ be a simple normal crossing 
divisor on a smooth variety $M$ and let $B$ be 
an $\mathbb R$-divisor 
on $M$ such that 
$Y$ and $B$ have no common irreducible components and 
that the support of $Y+B$ is a simple normal crossing divisor on $M$. In this 
situation, $(Y, B_Y)$, where $B_Y:=B|_Y$, 
is called a {\em{globally embedded simple 
normal crossing pair}}.
\end{defn}

\begin{defn}[Strata of simple normal crossing divisors]\label{z-def2.3}
Let $Y$ be a simple normal crossing divisor on a smooth variety 
and let $Y=\bigcup_{i\in I}Y_i$ be the irreducible decomposition of $Y$. 
A {\em{stratum}} of $Y$ is an irreducible component of 
$Y_{i_1}\cap \cdots \cap Y_{i_k}$ for some $\{i_1, \ldots, i_k\}\subset I$. 
\end{defn}

Let us recall the definition of {\em{quasi-log canonical pairs}}. 

\begin{defn}[Quasi-log canonical pairs]\label{z-def2.4}
Let $X$ be a scheme and let $\omega$ be an 
$\mathbb R$-Cartier divisor (or an $\mathbb R$-line bundle) on $X$. 
Let $f:Y\to X$ be a proper morphism from a globally embedded 
simple normal 
crossing pair $(Y, B_Y)$. If $B_Y$ is a 
subboundary $\mathbb R$-divisor, 
that is, $B_Y=B^{\leq 1}_Y$, 
$f^*\omega\sim _{\mathbb R} K_Y+B_Y$ holds, and the natural map 
$$\mathcal O_X\to f_*\mathcal O_Y(\lceil -(B_Y^{<1})\rceil)$$ is an 
isomorphism, 
then $\left(X, \omega, f:(Y, B_Y)\to X\right)$ 
is called a {\em{quasi-log canonical pair}} 
({\em{qlc pair}}, for short). If there is no danger of confusion, 
we simply say that $[X, \omega]$ is a qlc pair. 
\end{defn}

The notion of {\em{qlc strata}} and {\em{qlc centers}} 
is very important. It is indispensable for inductive 
treatments of quasi-log canonical pairs. 

\begin{defn}[Qlc strata and qlc centers]\label{z-def2.5} 
Let $\left(X, \omega, f:(Y, B_Y)\to X\right)$ be a quasi-log canonical 
pair as in 
Definition \ref{z-def2.4}. 
Let $\nu:Y^\nu\to Y$ be the normalization. 
We put $$K_{Y^\nu}+\Theta=\nu^*(K_Y+B_Y),$$ that is, 
$\Theta$ is the sum of the inverse images of $B_Y$ and 
the singular locus of $Y$. 
Then $(Y^\nu, \Theta)$ is sub log canonical in the usual sense. 
Let $W$ be a log canonical center of $(Y^\nu, \Theta)$ or 
an irreducible component of $Y^\nu$. 
Then $f\circ \nu(W)$ is called a {\em{qlc stratum}} of 
$\left(X, \omega, f:(Y, B_Y)\to X\right)$. 
If there is no danger of confusion, we simply call it 
a qlc stratum of 
$[X, \omega]$. 
If $C$ is a qlc stratum of $[X, \omega]$ but it is not 
an irreducible component of $X$, then $C$ is called 
a {\em{qlc center}} of $\left(X, \omega, f:(Y, B_Y)\to X\right)$ 
or simply of $[X, \omega]$. 
\end{defn}

One of the most important results in the theory of 
quasi-log schemes is {\em{adjunction}}. 

\begin{thm}[{Adjunction, see \cite[Theorem 6.3.5]{fujino-foundations}}]
\label{z-thm2.6}
Let $[X, \omega]$ be a qlc pair and let $X'$ be 
the union of some qlc strata of $[X, \omega]$. 
Then $[X', \omega|_{X'}]$ is a qlc pair 
such that 
the qlc strata of $[X', \omega|_{X'}]$ are exactly 
the qlc strata of $[X, \omega]$ that are contained in $X'$. 
\end{thm}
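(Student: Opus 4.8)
The plan is to produce the quasi-log structure on $X'$ by restricting the structure morphism $f\colon (Y,B_Y)\to X$ of $[X,\omega]$ to the part of $Y$ lying over $X'$, and to verify the defining isomorphism of Definition \ref{z-def2.4} by means of the fundamental vanishing and torsion-free theorem for globally embedded simple normal crossing pairs. Concretely, I would let $Y'$ be the union of all strata $S$ of $Y$ with $f(S)\subseteq X'$. Since $X'$ is a union of qlc strata and every qlc stratum is the $f$-image of a stratum of $Y$, the restriction $g:=f|_{Y'}\colon Y'\to X'$ is surjective, and $Y'$ is a reduced simple normal crossing variety arising as a union of strata of $Y$; invoking the standard fact that such a union can again be regarded as a globally embedded simple normal crossing pair, I would set $B_{Y'}:=B_Y|_{Y'}$ via adjunction for simple normal crossing pairs, so that $K_{Y'}+B_{Y'}=(K_Y+B_Y)|_{Y'}$. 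It is then routine to check that $B_{Y'}$ is again a subboundary, that $g^*(\omega|_{X'})\sim_{\mathbb R}K_{Y'}+B_{Y'}$, and that the rounding is compatible with restriction, namely $\lceil -(B_Y^{<1})\rceil|_{Y'}=\lceil -(B_{Y'}^{<1})\rceil$.

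The core of the argument is the isomorphism $\mathcal O_{X'}\cong g_*\mathcal O_{Y'}(\lceil -(B_{Y'}^{<1})\rceil)$ required by Definition \ref{z-def2.4}. Writing $A:=\lceil -(B_Y^{<1})\rceil$ and letting $\mathcal I_{Y'}$ be the defining ideal sheaf of $Y'$ in $Y$, I would push forward the short exact sequence
\begin{equation*}
0\to \mathcal O_Y(A)\otimes\mathcal I_{Y'}\to \mathcal O_Y(A)\to \mathcal O_{Y'}(A|_{Y'})\to 0
\end{equation*}
by $f$. Applying the fundamental vanishing and torsion-free theorem to the first term, I would show that the connecting homomorphism $g_*\mathcal O_{Y'}(A|_{Y'})\to R^1f_*(\mathcal O_Y(A)\otimes\mathcal I_{Y'})$ vanishes and that $f_*(\mathcal O_Y(A)\otimes\mathcal I_{Y'})$ is exactly the defining ideal sheaf $\mathcal I_{X'}$ of $X'$ in $X$. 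Combined with the given isomorphism $\mathcal O_X\cong f_*\mathcal O_Y(A)$, this yields both the desired isomorphism $\mathcal O_{X'}\cong g_*\mathcal O_{Y'}(A|_{Y'})$ and the fact that the scheme structure on $X'$ is the natural one, so that $[X',\omega|_{X'}]$ is a qlc pair.

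Finally, the statement on qlc strata is a matter of bookkeeping: the strata of $Y'$ are precisely the strata of $Y$ contained in $Y'$, that is, those mapped into $X'$, and the log canonical centers of the normalization $(Y'^\nu,\Theta')$ are the centers of $(Y^\nu,\Theta)$ lying over $X'$; hence $g$ carries them exactly onto the qlc strata of $[X,\omega]$ contained in $X'$. I expect the main obstacle to be the second step: the correct application of the vanishing theorem to pin down both the sheaf-theoretic image and the scheme structure on $X'$, together with the technical but standard reduction realizing the union of strata $Y'$ as a genuine globally embedded simple normal crossing pair.
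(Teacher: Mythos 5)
First, note that the paper itself contains no proof of this statement: Theorem \ref{z-thm2.6} is quoted as a special case of \cite[Theorem 6.3.5 (i)]{fujino-foundations} and the reader is referred there. Your outline does follow the strategy of that proof (restrict the quasi-log structure to the part of $Y$ lying over $X'$, push forward a residue-type exact sequence, and kill the connecting homomorphism with the torsion-free and vanishing package), so the comparison is with that argument.

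There is, however, a genuine gap, and it sits exactly at the point you dismiss as a ``technical but standard reduction.'' The union $Y'$ of all strata of $(Y,B_Y)$ mapped into $X'$ is \emph{not} in general realizable as a globally embedded simple normal crossing pair, nor even as a divisor on any smooth variety: if $Y$ is the union of the three coordinate hyperplanes in $M=\mathbb C^3$ and the strata mapped into $X'$ are precisely the three coordinate axes (and the origin), then $Y'$ is three concurrent lines in general position, which has three branches through a single point and hence can never be a simple normal crossing divisor on a smooth surface, and has the wrong dimension to be a divisor on anything larger. Consequently your short exact sequence $0\to\mathcal O_Y(A)\otimes\mathcal I_{Y'}\to\mathcal O_Y(A)\to\mathcal O_{Y'}(A|_{Y'})\to 0$ is not of the form to which the fundamental torsion-free and vanishing theorems apply: those theorems require the kernel to be of the shape $\mathcal O_{Y_1}\bigl(A-Y_2|_{Y_1}\bigr)$ for a decomposition $Y=Y_1\cup Y_2$ into unions of \emph{irreducible components}. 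The missing ingredient is \cite[Proposition 6.3.1]{fujino-foundations}: one must first replace $f:(Y,B_Y)\to X$ by a new globally embedded simple normal crossing pair, obtained by further blow-ups of the ambient space and preserving the defining isomorphism $\mathcal O_X\simeq f_*\mathcal O_Y(\lceil -(B_Y^{<1})\rceil)$, in which the union of all strata mapped into $X'$ \emph{is} a union of irreducible components of $Y$. This is precisely the reduction the present paper performs at the outset of its proof of Theorem \ref{f-thm1.1}, in the special case $X'=\Nqklt(X,\omega)$. Once that reduction is in place, the remainder of your argument --- identifying the pushforward of the kernel with $\mathcal I_{X'}$ via connectedness of the fibers of $f$, deducing the vanishing of the connecting homomorphism because $R^1f_*$ of the kernel has no associated prime contained in $X'$, and the bookkeeping of strata --- is the correct and standard line of proof.
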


We strongly recommend the reader to see \cite[Theorem 6.3.5]
{fujino-foundations} and its proof for the details of Theorem \ref{z-thm2.6}. 
Theorem \ref{z-thm2.6} is a special case of \cite[Theorem 6.3.5 (i)]
{fujino-foundations}. 

\begin{defn}[Union of all qlc centers]\label{z-def2.7} 
Let $[X, \omega]$ be a qlc pair. 
The union of all qlc centers of $[X, \omega]$ is denoted by 
$\Nqklt (X, \omega)$. It is very important that 
$$[\Nqklt (X, \omega), \omega|_{\Nqklt(X, \omega)}]$$ has a quasi-log canonical 
structure induced from $\left(X, \omega, f:(Y, B_Y)\to X\right)$ 
by adjunction (see Theorem \ref{z-thm2.6} and \cite[Theorem 6.3.5 (i)]
{fujino-foundations}). 
\end{defn}

The vanishing theorem is also a very important result. 
Theorem \ref{z-thm2.8} is a special case 
of \cite[Theorem 6.3.5 (ii)]{fujino-foundations}. 

\begin{thm}[{Vanishing theorem, see \cite[Theorem 6.3.5]{fujino-foundations}}]
\label{z-thm2.8}
Let $[X, \omega]$ be a qlc pair and let $\pi:X\to S$ be 
a proper morphism between schemes. 
Let $L$ be a Cartier divisor on $X$ such that 
$L-\omega$ is nef and log big over $S$ with respect to $[X, \omega]$, 
that is, $L-\omega$ is $\pi$-nef and $(L-\omega)|_W$ is 
$\pi$-big for every qlc stratum $W$ of $[X, \omega]$. 
Then $R^i\pi_*\mathcal O_X(L)=0$ for 
every $i>0$. 
\end{thm}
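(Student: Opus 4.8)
The plan is to reduce Corollary \ref{f-cor1.2} to the vanishing theorem for qlc pairs, Theorem \ref{z-thm2.8}, applied to the pair $[Z, \nu^*\omega]$ produced by Theorem \ref{f-thm1.1}. Concretely, I would take the proper morphism $\pi\circ\nu\colon Z\to S$ together with the Cartier divisor $\nu^*L$ on $Z$, and verify that $\nu^*L-\nu^*\omega=\nu^*(L-\omega)$ is nef and log big over $S$ with respect to $[Z,\nu^*\omega]$. Once those hypotheses are in place, Theorem \ref{z-thm2.8} yields $R^i(\pi\circ\nu)_*\mathcal O_Z(\nu^*L)=0$ for every $i>0$ directly. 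Note that $\nu^*L$ is Cartier since $L$ is, and that $\pi\circ\nu$ is proper because the normalization $\nu$ is finite.

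The nef part is routine. As $\nu$ is finite, any curve $\Gamma$ contained in a fiber of $\pi\circ\nu$ is carried by $\nu$ either to a point or to a curve contained in a fiber of $\pi$; by the projection formula $\nu^*(L-\omega)\cdot\Gamma=(L-\omega)\cdot\nu_*\Gamma\geq 0$, using that $L-\omega$ is $\pi$-nef. Hence $\nu^*(L-\omega)$ is $(\pi\circ\nu)$-nef.

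The substantive step is log bigness, which amounts to matching qlc strata across $\nu$. Let $C$ be any qlc stratum of $[Z,\nu^*\omega]$. If $C$ is a qlc center, then $\nu(C)$ is a qlc center of $[X,\omega]$ by property (i) of Theorem \ref{f-thm1.1}; and if $C=Z$ (recall $Z$ is irreducible, being the normalization of the irreducible $X$), then $\nu(C)=X$. In either case $\nu(C)$ is a qlc stratum of $[X,\omega]$, so $(L-\omega)|_{\nu(C)}$ is big over $S$ by hypothesis. The restriction $\nu|_C\colon C\to\nu(C)$ is finite and surjective, and $(\nu^*(L-\omega))|_C=(\nu|_C)^*\bigl((L-\omega)|_{\nu(C)}\bigr)$; since bigness over $S$ is preserved under pullback by a finite surjective morphism, $(\nu^*(L-\omega))|_C$ is big over $S$. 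As $C$ was arbitrary, $\nu^*(L-\omega)$ is log big over $S$ with respect to $[Z,\nu^*\omega]$, which completes the verification of the hypotheses of Theorem \ref{z-thm2.8}.

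The only genuine obstacle is the last invariance claim, namely that relative bigness is preserved under pullback by a finite surjective morphism; this is standard but deserves a careful statement in the over-$S$ setting. Everything else is formal once Theorem \ref{f-thm1.1} and its property (i) are available.
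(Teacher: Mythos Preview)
Your proposal does not prove Theorem~\ref{z-thm2.8}; it proves Corollary~\ref{f-cor1.2}, and you say so in your opening sentence. Theorem~\ref{z-thm2.8} is the general vanishing theorem for qlc pairs, quoted from \cite[Theorem~6.3.5]{fujino-foundations} and not proved in this paper at all. Your argument \emph{assumes} Theorem~\ref{z-thm2.8} as a black box and applies it to the qlc pair $[Z,\nu^*\omega]$; it contains no step toward establishing the vanishing statement itself.

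Viewed instead as a proof of Corollary~\ref{f-cor1.2}, your approach is exactly the one the paper takes, only with the details written out. The paper's proof of Corollary~\ref{f-cor1.2} is a single sentence: it follows from Theorems~\ref{f-thm1.1} and~\ref{z-thm2.8}. Your verification that $\nu^*(L-\omega)$ is $(\pi\circ\nu)$-nef and log big over $S$ with respect to $[Z,\nu^*\omega]$, using property~(i) of Theorem~\ref{f-thm1.1} to push each qlc stratum of $[Z,\nu^*\omega]$ to a qlc stratum of $[X,\omega]$, is precisely the content implicit in that sentence.
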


The notion of {\em{$\mathbb Q$-structures}} is 
introduced in \cite{fujino-slc-trivial}. 

\begin{defn}[$\mathbb Q$-structures]\label{z-def2.9}
If $\omega$ is a $\mathbb Q$-Cartier 
divisor (or a $\mathbb Q$-line bundle) on $X$, $B_Y$ is a $\mathbb Q$-divisor 
on $Y$, 
and $f^*\omega\sim _{\mathbb Q} K_Y+B_Y$ holds in 
Definition \ref{z-def2.4}, 
then we say that $\left(X, \omega, f:(Y, B_Y)\to X\right)$ 
has a $\mathbb Q$-structure or simply say that $[X, \omega]$ has a 
$\mathbb Q$-structure. 
\end{defn}

\begin{rem}\label{z-rem2.10} 
If $[X, \omega]$ has a $\mathbb Q$-structure, 
then we can easily see that for any union of 
qlc strata $X'$ the qlc pair $[X', \omega|_{X'}]$ 
naturally has a $\mathbb Q$-structure in 
Theorem \ref{z-thm2.6}. 
For the details, see the proof of \cite[Theorem 6.3.5]{fujino-foundations}. 
\end{rem}

We close this section with an important example. 

\begin{ex}\label{z-ex2.11}
Let $(X, \Delta)$ be a log canonical pair. 
We put $\omega=K_X+\Delta$. 
Let $f:Y\to X$ be a resolution 
such that $K_Y+B_Y=f^*(K_X+\Delta)$. 
We assume that the support of $B_Y$ is a simple normal crossing 
divisor on $Y$. 
Then we can easily see that $(Y, B_Y)$ is a globally embedded 
simple normal crossing pair, 
$B_Y=B^{\leq 1}_Y$, and the natural map 
$$
\mathcal O_X\to f_*\mathcal O_Y(\lceil (-B^{<1}_Y)\rceil)
$$ 
is an isomorphism. 
Therefore, we can see that 
$\left(X, \omega, f:(Y, B_Y)\to X\right)$ is a qlc 
pair. 
In this situation, $W$ is a qlc stratum of $[X, \omega]$ 
if and only if $W$ is a log canonical center of $(X, \Delta)$ or 
$X$ itself. 
\end{ex}

Anyway, we recommend the reader to see \cite[Chapter 6]{fujino-foundations} 
for the theory of quasi-log schemes. 

\section{Proof}\label{f-sec3}

Let us start the proof of Theorem \ref{f-thm1.1}. 

\begin{proof}[Proof of Theorem \ref{f-thm1.1}]
Let $f:(Y, B_Y)\to X$ be a proper surjective morphism from a globally 
embedded simple normal crossing pair $(Y, B_Y)$ as in 
Definition \ref{z-def2.4}. 
By \cite[Proposition 6.3.1]{fujino-foundations}, 
we may assume that 
the union of all strata of $(Y, B_Y)$ mapped to $\Nqklt(X, \omega)$, 
which is denoted by $Y''$, is a union of some irreducible 
components of $Y$. 
We put $Y'=Y-Y''$ and $K_{Y'}+B_{Y'}=(K_Y+B_Y)|_{Y'}$. 
Then we obtain the following commutative diagram: 
$$
\xymatrix{
Y' \ar[d]_{f'}\ar@{^(->}[r]^\iota&Y\ar[d]^f\\ 
V \ar[r]_p& X
}
$$
where $\iota:Y'\to Y$ is a natural closed immersion 
and 
$$
\xymatrix{Y' \ar[r]^{f'}& V \ar[r]^p& X
}
$$ 
is the Stein factorization of $f\circ \iota:Y'\to X$. 
By construction, 
$\iota:Y'\to Y$ is an isomorphism 
over the generic point of $X$.
By construction again, the natural map $\mathcal O_V\to 
f'_*\mathcal O_{Y'}$ is an isomorphism 
and every stratum of $Y'$ is dominant onto $V$. 
Therefore, $p$ is birational. 
\begin{claim}\label{f-claim1}
$V$ is normal. 
\end{claim}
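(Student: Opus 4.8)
The plan is to show that $V$ is normal by exhibiting it as the normalization of $X$ up to the morphism $p$, using the defining isomorphism $\mathcal{O}_V \cong f'_*\mathcal{O}_{Y'}$ together with the geometry of $Y'$.

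First I would recall the key structural facts already established in the setup. We have the Stein factorization $Y' \xrightarrow{f'} V \xrightarrow{p} X$ of $f \circ \iota$, so by definition of Stein factorization the natural map $\mathcal{O}_V \to f'_*\mathcal{O}_{Y'}$ is an isomorphism and $V$ has connected fibers over $X$; moreover $p$ is finite. The excerpt has already noted that $p$ is birational (because every stratum of $Y'$ dominates $V$ and $\iota$ is an isomorphism over the generic point of $X$). So $p: V \to X$ is a finite birational morphism onto the irreducible variety $X$.

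Next, the heart of the argument: I would argue that $V$ is normal because it receives a surjection from a variety $Y'$ that is, in the relevant sense, as nice as a simple normal crossing scheme can be, and because normality can be read off from $f'_*\mathcal{O}_{Y'}$. The cleanest route is to check Serre's criterion ($R_1 + S_2$) for $V$, or equivalently to show $\mathcal{O}_V$ is integrally closed in its total ring of fractions. Since $\mathcal{O}_V \cong f'_*\mathcal{O}_{Y'}$, and $Y'$ is a (reduced) simple normal crossing scheme, one uses that $f'$ is proper with $f'_*\mathcal{O}_{Y'} = \mathcal{O}_V$ and that the fibers are connected; the point is that any local section of the integral closure of $\mathcal{O}_V$ pulls back to a section of $\mathcal{O}_{Y'}$ on a neighborhood (using that $Y'$ has no embedded structure and the strata dominate $V$), hence already lies in $f'_*\mathcal{O}_{Y'} = \mathcal{O}_V$. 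Concretely I expect to invoke that a scheme whose structure sheaf equals the pushforward of the structure sheaf along a proper morphism from a variety with connected fibers, combined with $S_2$-ness inherited from the construction, forces integral closedness.

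The main obstacle I anticipate is verifying the $S_2$ condition and, correspondingly, ruling out that $V$ could have codimension-one non-normal points that are invisible to $f'_*\mathcal{O}_{Y'}$. The subtlety is that $Y'$ itself is typically reducible and singular (it is an SNC scheme), so one cannot simply quote normality of $Y'$; instead the normality of $V$ must come from the interplay between the condition $\mathcal{O}_V \cong f'_*\mathcal{O}_{Y'}$ and the fact that \emph{every} stratum of $Y'$ dominates $V$ (so there is no stratum collapsing to create a non-normal codimension-one locus). I would therefore expect the crux of the proof to be a careful local analysis near codimension-one points of $V$, showing that connectedness of fibers plus dominance of all strata of $Y'$ prevents gluing-type non-normality, so that $V$ satisfies both $R_1$ and $S_2$ and is thus normal.
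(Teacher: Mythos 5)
Your overall strategy --- show that $\mathcal O_V\cong f'_*\mathcal O_{Y'}$ is integrally closed, using that every stratum of $Y'$ dominates $V$ --- is sound, and you have identified the right inputs. But there is a genuine gap: the one step that carries all the content, namely that ``any local section of the integral closure of $\mathcal O_V$ pulls back to a section of $\mathcal O_{Y'}$,'' is asserted rather than proved, and you yourself defer it to ``a careful local analysis near codimension-one points'' that never appears. The difficulty is exactly the one you flag: $Y'$ is a reducible simple normal crossing scheme, so the pullback of a rational function $g$ on $V$ that is integral over $\mathcal O_V$ is, a priori, only a regular function on each irreducible component $Y'_i$ separately (each $Y'_i$ is smooth, hence normal, and dominates $V$, so integrality gives regularity there). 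To obtain a section of $\mathcal O_{Y'}$ one must check that these functions agree on the intersections $Y'_i\cap Y'_j$, and this is precisely where the hypothesis that \emph{every} stratum of $Y'$ dominates $V$ enters: each irreducible component of $Y'_i\cap Y'_j$ is a stratum, hence maps dominantly to $V$, so the two restrictions agree on a dense subset of each such component and therefore everywhere, and the local functions glue via the sequence $0\to\mathcal O_{Y'_i\cup Y'_j}\to\mathcal O_{Y'_i}\oplus\mathcal O_{Y'_j}\to\mathcal O_{Y'_i\cap Y'_j}\to 0$. Without this step your argument does not rule out the ``gluing-type'' non-normality you worry about; with it, the detour through $R_1$ and $S_2$ for $V$ is unnecessary, since $V$ is integral (every component of $Y'$ dominates $V$, and $\mathcal O_V\subset f'_*\mathcal O_{Y'}$ is reduced), so integral closedness of $\mathcal O_V$ in its fraction field is already normality. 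Also, the property of $Y'$ that matters is not the absence of embedded components but the normality of its components together with the reducedness of their intersections.

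For comparison, the paper closes this same gap by a slightly different device: it lifts $f'$ to a rational map $Y'\dashrightarrow V^n$ to the normalization of $V$, observes that dominance of all strata forces this map to be defined outside a closed subset $\Sigma\subset Y'$ of codimension at least two, and then uses that $Y'$ is Cohen--Macaulay (a simple normal crossing divisor on a smooth variety) so that the graph $q:\widetilde Y\to Y'$ of the lift satisfies $q_*\mathcal O_{\widetilde Y}=\mathcal O_{Y'}$; pushing forward then yields $\pi_*\mathcal O_{V^n}=\mathcal O_V$. Either route works, but in both the decisive point is the one your write-up leaves open.
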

\begin{proof}[Proof of Claim \ref{f-claim1}] 
(cf.~the proof of \cite[Lemma 6.3.9]{fujino-foundations}). 
Let $\pi:V^n\to V$ be the normalization. 
Since every stratum of $Y'$ is dominant onto $V$, 
there exists a closed subset $\Sigma$ of $Y'$ such that 
$\mathrm{codim}_{Y'}\Sigma\geq 2$ and that 
$\pi^{-1}\circ f': Y'\dashrightarrow V^n$ is a morphism 
on $Y'\setminus \Sigma$. 
Let $\widetilde Y$ be the graph of 
$\pi^{-1}\circ f': Y'\dashrightarrow V^n$. 
Then we have the following commutative 
diagram: 
$$
\xymatrix{
\widetilde Y \ar[d]_-{\widetilde f}\ar[r]^-q&Y'\ar[d]^-{f'}\\ 
V^n\ar[r]_-\pi& V
}
$$
where $q$ and $\widetilde f$ are natural projections. 
Note that $q:\widetilde Y\to Y'$ is an isomorphism 
over $Y\setminus \Sigma$ by construction. 
Since $Y'$ is a simple normal crossing divisor on a smooth variety and 
$\mathrm{codim}_{Y'}\Sigma\geq 2$, the natural map 
$\mathcal O_{Y'}\to q_*\mathcal O_{\widetilde Y}$ is an isomorphism. 
Therefore, the composition 
$$
\mathcal O_V\to \pi_*\mathcal O_{V^n}\to \pi_*\widetilde f_*\mathcal O_{\widetilde Y}
=f'_*q_*\mathcal O_{\widetilde Y}\simeq \mathcal O_V
$$ 
is an isomorphism. 
Thus we have $\mathcal O_V\simeq \pi_*\mathcal O_{V^n}$. 
This implies that $V$ is normal. 
\end{proof} 
Therefore, $p:V\to X$ is nothing but the normalization $\nu:Z\to X$. 
So we have the following commutative diagram. 
$$
\xymatrix{
Y' \ar[d]_{f'}\ar@{^(->}[r]^\iota&Y\ar[d]^f\\ 
Z \ar[r]_\nu& X
}
$$
\begin{claim}\label{f-claim2}The natural map 
$$
\alpha: \mathcal O_Z\to f'_*\mathcal O_{Y'}
(\lceil -(B^{<1}_{Y'})\rceil)
$$ 
is an isomorphism. 
\end{claim}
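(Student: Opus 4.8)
The plan is to establish the injectivity of $\alpha$ formally and then reduce the surjectivity to a codimension-one statement on the normal variety $Z$. For injectivity, note that $A':=\lceil -(B^{<1}_{Y'})\rceil\geq 0$, so the inclusion $\mathcal O_{Y'}\hookrightarrow \mathcal O_{Y'}(A')$ together with the already established identity $f'_*\mathcal O_{Y'}=\mathcal O_Z$ (coming from the Stein factorization, since $V=Z$) exhibits $\alpha$ as the injection $\mathcal O_Z=f'_*\mathcal O_{Y'}\hookrightarrow f'_*\mathcal O_{Y'}(A')$. For the reduction, I would first check that $F:=f'_*\mathcal O_{Y'}(A')$ is torsion-free of rank one: torsion-freeness holds because $\mathcal O_{Y'}(A')$ is torsion-free on the reduced scheme $Y'$, $Z$ is integral, and every irreducible component of $Y'$ dominates $Z$; the rank is one because $A'$ vanishes at the generic point of $Z$, where $F$ agrees with $f'_*\mathcal O_{Y'}=\mathcal O_Z$. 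Hence $\mathcal O_Z\subseteq F$ sits inside the constant sheaf of rational functions on $Z$, and since $Z$ is normal a rational function that is regular in codimension one is regular. Therefore it suffices to prove that $F_z=\mathcal O_{Z,z}$ at every codimension-one point $z\in Z$.

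At such a $z$ the local ring $\mathcal O_{Z,z}$ is a discrete valuation ring with valuation $v_z$, and a local section of $F$ is a rational function $\sigma$ on $Z$ whose pullback $f'^*\sigma$ satisfies $v_y(f'^*\sigma)\geq -\operatorname{coeff}_y(A')$ for every prime divisor $y$ of $Y'$. Writing $e_y=v_y(f'^*t)$ for a uniformizer $t$ at $z$, this reads $v_z(\sigma)\geq -\operatorname{coeff}_y(A')/e_y$ for every $y$ lying over $z$. Consequently the whole statement reduces to the following divisorial fact $(\dagger)$: \emph{every codimension-one point $z\in Z$ has a prime divisor $y$ of $Y'$ lying over it with $\operatorname{coeff}_y(B_{Y'})\geq 0$}. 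Indeed, for such a $y$ one has $\operatorname{coeff}_y(A')=0$, which forces $v_z(\sigma)\geq 0$, i.e. $\sigma\in\mathcal O_{Z,z}$.

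To establish $(\dagger)$ I would argue as follows. Since every stratum of $Y'$ dominates $Z$, no stratum can lie over the codimension-one point $z$, so the prime divisors of $Y'$ over $z$ are either non-boundary divisors (coefficient $0$ in $B_{Y'}$) or components of $B_{Y'}$. If $z$ lies over $X\setminus \Nqklt(X,\omega)$, then $\nu$ is an isomorphism near $x=\nu(z)$ and $f^{-1}(x)$ avoids $Y''$, so locally $Y=Y'$ and $A=A'$; then the required non-negative divisor over $z$ is produced by exactly the same valuation computation applied to the given isomorphism $\mathcal O_X\simeq f_*\mathcal O_Y(\lceil -(B^{<1}_Y)\rceil)$. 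If instead $z$ lies over $\Nqklt(X,\omega)$, I would invoke the adjunction formula $B_{Y'}=B_Y|_{Y'}+(Y'\cap Y'')$: the components of $Y'\cap Y''$ occur in $B_{Y'}$ with coefficient one and hence contribute $0$ to $A'$, and the point is that these coefficient-one divisors dominate precisely the codimension-one points of $Z$ lying over $\Nqklt(X,\omega)$.

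The main obstacle is exactly this last case $z\in\nu^{-1}(\Nqklt(X,\omega))$: one must verify that for every such $z$ there is a coefficient-one divisor $Y'\cap Y''$ (or a coefficient-zero divisor) of $Y'$ lying over the correct branch of the normalization. Here I would exploit that, by the reduction via \cite[Proposition 6.3.1]{fujino-foundations}, $Y''$ is a union of whole irreducible components of $Y$ mapping into $\Nqklt(X,\omega)$, that $\nu$ is finite (so codimension-one points of $Z$ map to codimension-one points of $X$), and that every stratum of $Y'$ dominates $Z$; together these should pin down the branch-by-branch behaviour and yield $(\dagger)$. I expect that, alternatively, the whole claim can be deduced in one step from the foundational principle that for a globally embedded simple normal crossing pair all of whose strata dominate a normal base, the natural map from the structure sheaf to the pushforward of $\mathcal O(\lceil -(B^{<1})\rceil)$ is an isomorphism; the codimension-one analysis above is essentially the proof of that principle in the present situation.
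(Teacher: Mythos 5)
Your overall strategy coincides with the paper's: reduce to a codimension-one check on the normal variety $Z$ using torsion-freeness, and then produce, over each prime divisor $P$ of $Z$, a prime divisor of $Y'$ on which $\lceil -(B^{<1}_{Y'})\rceil$ vanishes. The case of $P$ lying over $X\setminus \Nqklt(X,\omega)$ is handled the same way in both arguments. However, the step you yourself flag as ``the main obstacle'' --- statement $(\dagger)$ for a prime divisor $P\subset \nu^{-1}(\Nqklt(X,\omega))$ --- is a genuine gap, and the tools you propose to close it (finiteness of $\nu$, $Y''$ being a union of components, strata of $Y'$ dominating $Z$) are not sufficient. None of them controls \emph{which branch} of $\nu^{-1}(\nu(P))$ a given coefficient-one divisor dominates: a priori one branch $P$ could receive only divisors of $Y'$ with negative coefficient in $B_{Y'}$. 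The missing ingredient, which the paper invokes explicitly, is the connectedness of the fibers of the \emph{original} morphism $f:Y\to X$ (from $f_*\mathcal O_Y\simeq \mathcal O_X$). Over the generic point $x$ of $\nu(P)$ the connected fiber $f^{-1}(x)$ decomposes into $f^{-1}(x)\cap Y''$ (nonempty, since $\nu(P)$, being irreducible of codimension one inside $\Nqklt(X,\omega)$, is itself a qlc center, hence the image of a stratum contained in $Y''$) and the disjoint pieces $f'^{-1}(\eta_{P_j})$ over the finitely many branches $P_j$; connectedness forces each $f'^{-1}(\eta_{P_j})$ to meet $Y''|_{Y'}$. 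A component $D$ of $Y''|_{Y'}$ meeting $f'^{-1}(\eta_{P_j})$ has $f'(D)$ irreducible, closed, proper (as $f(D)\subset \Nqklt(X,\omega)$) and containing $P_j$, hence $f'(D)=P_j$; and $\operatorname{coeff}_D B_{Y'}=1$ exactly, since $B_{Y'}=B_Y|_{Y'}+Y''|_{Y'}$ and the simple normal crossing hypothesis on $Y+B$ in the ambient space prevents $D$ from also being a component of $B_Y|_{Y'}$. This is what yields $(\dagger)$.

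Your proposed one-step alternative at the end is not valid as stated: the strata of the \emph{pair} $(Y',B_{Y'})$ do not all dominate $Z$ (those inside $Y''|_{Y'}$ map into $\nu^{-1}(\Nqklt(X,\omega))$), and in any case ``all strata dominate a normal base'' does not by itself imply $\mathcal O_Z\simeq f'_*\mathcal O_{Y'}(\lceil -(B^{<1}_{Y'})\rceil)$ --- that isomorphism is an axiom in the definition of a qlc pair, not a consequence of domination, and here it must be deduced from the corresponding axiom for $f:(Y,B_Y)\to X$ via the connectedness argument above.
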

\begin{proof}[Proof of Claim \ref{f-claim2}]
Note that $\nu: Z\to X$ is an isomorphism 
over $X\setminus \Nqklt (X, \omega)$. 
Therefore, $\alpha$ is an isomorphism outside 
$\nu^{-1}(\Nqklt (X, \omega))$. 
Since $Z$ is normal and $f'_*\mathcal O_{Y'}(\lceil -(B^{<1}_{Y'})
\rceil)$ is torsion-free, it is sufficient to see that $\alpha$ is an isomorphism 
in codimension one. 
Let $P$ be any prime divisor on $Z$ such that 
$P\subset \nu^{-1}(\Nqklt (X, \omega))$. 
Then, by construction, there exists an irreducible 
component of $B^{=1}_{Y'}$ which maps onto $P$. 
We note that every fiber of $f$ is connected by $f_*\mathcal O_Y
\simeq \mathcal O_X$. 
Therefore, the effective divisor $\lceil -(B^{<1}_{Y'})\rceil$ 
does not contain the whole fiber of $f'$ over the generic 
point of $P$. Thus, 
$\alpha$ is an isomorphism at the generic point of $P$. 
This means that $\alpha$ is an isomorphism. 
\end{proof}
Therefore, by Claim \ref{f-claim2}, 
$f':(Y', B_{Y'})\to Z$ defines a quasi-log structure on $[Z, \nu^*\omega]$. 
By construction, the property (i) automatically holds. 
Let us consider the following ideal sheaf: 
$$
\mathcal I=f'_*\mathcal O_{Y'}(\lceil -(B^{<1}_{Y'})\rceil 
-Y''|_{Y'})\subset 
f'_*\mathcal O_{Y'}(\lceil -(B^{<1}_{Y'})\rceil)=\mathcal O_Z. 
$$ 
We note that $\mathcal I=\mathcal I_{\Nqklt (Z, \nu^*\omega)}$ 
since $\Nqklt(Z, \nu^*\omega)=f'(Y''|_{Y'})$. 
\begin{claim}\label{f-claim3}
$f_*\mathcal O_{Y'}(\lceil -(B^{<1}_{Y'})\rceil -Y''|_{Y'})=\mathcal I_{\Nqklt (X, \omega)}$ holds. 
\end{claim}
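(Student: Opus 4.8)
The plan is to exhibit both sides of Claim \ref{f-claim3} as the kernel of one and the same restriction map on $X$, the one induced by the inclusion $Y''\hookrightarrow Y$. Write $A:=\lceil -(B^{<1}_Y)\rceil$, an effective integral divisor on $Y$ which is Cartier because the support of $Y+B$ is simple normal crossing; note that $A|_{Y'}=\lceil -(B^{<1}_{Y'})\rceil$. Since $Y''$ is a union of irreducible components of $Y$ and $Y'=Y-Y''$, the ideal sheaf of $Y''$ in $Y$ is $\iota_*\mathcal O_{Y'}(-Y''|_{Y'})$. Twisting the restriction sequence $0\to \mathcal I_{Y''/Y}\to \mathcal O_Y\to \mathcal O_{Y''}\to 0$ by the line bundle $\mathcal O_Y(A)$ therefore produces
$$
0\to \iota_*\mathcal O_{Y'}(A|_{Y'}-Y''|_{Y'})\to \mathcal O_Y(A)\to \mathcal O_{Y''}(A|_{Y''})\to 0.
$$

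First I would push this sequence forward by $f$. The defining isomorphism $f_*\mathcal O_Y(\lceil -(B^{<1}_Y)\rceil)=\mathcal O_X$ of the qlc pair $[X,\omega]$ (Definition \ref{z-def2.4}) identifies the middle term with $\mathcal O_X$, and since $\iota$ is a closed immersion we have $f_*\iota_*=(f\circ\iota)_*$. Hence left exactness of $f_*$ yields
$$
f_*\mathcal O_{Y'}(A|_{Y'}-Y''|_{Y'})=\ker(\mathcal O_X\to f_*\mathcal O_{Y''}(A|_{Y''})),
$$
where on the left I abbreviate $(f\circ\iota)_*$ by $f_*$ exactly as in the statement. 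Note that only left exactness is needed here; no vanishing of a higher direct image is required to extract the kernel.

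The second, and main, step is to identify the target $f_*\mathcal O_{Y''}(A|_{Y''})$ together with the natural map out of $\mathcal O_X$. Here I invoke adjunction. By construction $Y''$ is precisely the union of the strata of $(Y,B_Y)$ mapped into $\Nqklt(X,\omega)$, so by Theorem \ref{z-thm2.6} and Definition \ref{z-def2.7} the induced qlc structure on $X':=\Nqklt(X,\omega)$ is the one carried by $f|_{Y''}\colon(Y'',B_{Y''})\to X'$ with $K_{Y''}+B_{Y''}=(K_Y+B_Y)|_{Y''}$. The boundary $B_{Y''}$ differs from $B_Y|_{Y''}$ only by the double locus $Y'|_{Y''}$, which has coefficient one and so does not enter $-(B^{<1}_{Y''})$; consequently $A|_{Y''}=\lceil -(B^{<1}_{Y''})\rceil$. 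The definition of a qlc pair applied to $f|_{Y''}$ then gives $f_*\mathcal O_{Y''}(A|_{Y''})=\mathcal O_{X'}$, the map in the displayed kernel being the natural restriction $\mathcal O_X\to \mathcal O_{X'}$, whose kernel is by definition $\mathcal I_{\Nqklt(X,\omega)}$. This proves the claim.

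I expect this last identification to be the delicate point. It is scheme-theoretic rather than set-theoretic, so knowing merely that $f(Y'')=\Nqklt(X,\omega)$ as sets is not enough: one must verify that $f_*\mathcal O_{Y''}(A|_{Y''})$ is exactly the structure sheaf of $\Nqklt(X,\omega)$ equipped with its adjunction scheme structure, and that the map induced from $\mathcal O_Y(A)\to\mathcal O_{Y''}(A|_{Y''})$ is the natural surjection onto it. Both facts are furnished by the proof of \cite[Theorem 6.3.5]{fujino-foundations}, so the argument reduces to the coefficient bookkeeping for $A|_{Y''}$ above together with a careful citation of adjunction.
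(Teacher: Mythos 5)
Your proposal is correct and follows essentially the same route as the paper: the authors' proof of Claim \ref{f-claim3} is a terse appeal to the exact sequence argument in the proof of \cite[Theorem 6.3.5 (i)]{fujino-foundations} (restriction sequence for $Y''\subset Y$ twisted by $\lceil -(B_Y^{<1})\rceil$, pushed forward by $f$, with the cokernel identified as $\mathcal O_{\Nqklt(X,\omega)}$ via adjunction and connectedness of the fibers of $f$), which is exactly what you spell out. You also correctly isolate the one delicate point --- that the induced map $\mathcal O_X\to f_*\mathcal O_{Y''}(\lceil -(B^{<1}_{Y''})\rceil)$ is the restriction onto the reduced structure sheaf of $\Nqklt(X,\omega)$ --- and defer it to the same reference the paper cites.
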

\begin{proof}[Proof of Claim \ref{f-claim3}] 
(cf.~the proof of \cite[Theorem 6.3.5 (i)]{fujino-foundations}). 
Since $\mathcal O_{Y'}(\lceil -(B^{<1}_{Y'})\rceil-Y''|_{Y'})\subset 
\mathcal O_Y(\lceil -(B^{<1}_Y)\rceil)$, 
we get 
$$
f_*\mathcal O_{Y'}(\lceil -(B^{<1}_{Y'})\rceil -Y''|_{Y'})\subset 
f_*\mathcal O_Y(\lceil -(B^{<1}_Y)\rceil)=\mathcal O_X, 
$$ 
that is, $f_*\mathcal O_{Y'}(\lceil -(B^{<1}_{Y'})\rceil -Y''|_{Y'})$ is an ideal 
sheaf on $X$. 
By construction, 
$$f_*\mathcal O_{Y'}(\lceil -(B^{<1}_{Y'})\rceil -Y''|_{Y'})=\mathcal I_{\Nqklt (X, \omega)}
$$ holds. 
Here, we used the fact that every fiber of $f$ is connected. 
\end{proof}
Claim \ref{f-claim3} implies that 
$$\nu_*\mathcal I=\nu_*f'_*\mathcal O_{Y'}(\lceil -(B^{<1}_{Y'})\rceil -Y''|_{Y'})
=f_*\mathcal O_{Y'}(\lceil -(B^{<1}_{Y'})\rceil -Y''|_{Y'})=\mathcal I_{\Nqklt (X, \omega)}. 
$$ 
Since $\nu$ is finite, 
$\mathcal I=\nu^{-1}\mathcal I_{\Nqklt (X, \omega)}\cdot \mathcal O_Z$. 
Therefore, we have 
$\nu^{-1}(\Nqklt (X, \omega))=\Nqklt (Z, \nu^*\omega)$. 
This means that (ii) holds. 
\end{proof}

\begin{proof}[Proof of Corollary \ref{f-cor1.2}]
This follows from Theorems \ref{f-thm1.1} and 
\ref{z-thm2.8} (see also \cite[Theorem 6.3.5 (ii)]{fujino-foundations}). 
\end{proof}

Finally, we prove Proposition \ref{f-prop1.4}

\begin{proof}[Proof of Proposition \ref{f-prop1.4}]
We prove Conjecture \ref{f-conj1.3} under the extra assumption that 
Conjecture \ref{f-conj1.3} holds true for normal qlc pairs. 
Let $[X, \omega]$ be a qlc pair. 
Let $X_1$ be an irreducible component of $X$ and 
let $X_2$ be the union of the irreducible components 
of $X$ other than $X_1$. 
Then $X_1$, $X_2$, and $X_1\cap X_2$ are qlc pairs by adjunction 
(see Theorem \ref{z-thm2.6} and \cite[Theorem 6.3.5 (i)]{fujino-foundations}). 
In particular, they are seminormal (see \cite[Remark 6.2.11]{fujino-foundations}). 
Then we have the following short exact sequence 
$$
0\to \mathcal O_X\to \mathcal O_{X_1}\oplus \mathcal O_{X_2}\to 
\mathcal O_{X_1\cap X_2}\to 0
$$ 
(see, for example, \cite[Lemma 10.21]{kollar}). 
By \cite[Lemma 5.3.9]{fujino-foundations}, 
it is sufficient to prove Conjecture \ref{f-conj1.3} 
under the extra assumption that 
$X$ is irreducible by induction on $\dim X$ and 
the number of the irreducible components of $X$. 
Therefore, from now on, we assume that 
$X$ is irreducible. 
Let $\nu:Z\to X$ be the normalization. 
Then, by Theorem \ref{f-thm1.1}, 
we have 
\begin{equation}\label{f-eq3.1}
R\nu_*\mathcal I_{\Nqklt (Z, \nu^*\omega)}=\mathcal I_{\Nqklt (X, \omega)}. 
\end{equation} 
By induction on dimension, $\Nqklt (Z, \nu^*\omega)$ and 
$\Nqklt (X, \omega)$ are Du Bois since they are qlc (see Definition 
\ref{z-def2.7}). 
Since $Z$ is normal and $[Z, \nu^*\omega]$ is qlc, $Z$ is Du Bois by assumption. 
Therefore, by \cite[Corollary 6.28]{kollar} and 
\eqref{f-eq3.1}, $X$ is Du Bois. This is what we wanted. 
\end{proof}

We close this section with a remark on $\mathbb Q$-structures. 

\begin{rem}\label{f-rem3.1}
If $[X, \omega]$ has a $\mathbb Q$-structure in Theorem 
\ref{f-thm1.1}, 
then we can easily see that $[Z, \nu^*\omega]$ also has 
a $\mathbb Q$-structure by the proof of 
Theorem \ref{f-thm1.1}. 
\end{rem}

\end{document}